\theoremstyle{definition}
\newtheorem{definition}{Definition}[section]
\theoremstyle{plain}
\newtheorem{lemma}[definition]{Lemma}
\newtheorem{theorem}[definition]{Theorem}
\theoremstyle{remark}
\begin{document}
	
	\title[$G$-invariant definable Tietze extension theorem]{$G$-invariant definable Tietze extension theorem}
	\author[M. Fujita]{Masato Fujita}
	\address{Department of Liberal Arts,
		Japan Coast Guard Academy,
		5-1 Wakaba-cho, Kure, Hiroshima 737-8512, Japan}
	\email{fujita.masato.p34@kyoto-u.jp}
	
	\author[T. Kawakami]{Tomohiro Kawakami}
	\address{Department of Mathematics,
		Wakayama University,
		Wakayama, 640-8510, Japan}
	\email{kawa0726@gmail.com}
	
	\begin{abstract}
		A $G$-invariant version of definable Tietze extension theorem for definably complete structures is proved when a definably  compact definable topological group $G$ acts definably and continuously on the definable set.
	\end{abstract}
	
	\subjclass[2020]{Primary 03C64}
	
	\keywords{Definable Tietze extension theorem}
	
	\maketitle
	
\section{Introduction}\label{sec:intro}
Consider a definably complete expansion of an ordered field $\mathcal F=(F,<,+,\cdot,0,1,\ldots)$.
Let $X$ be a locally closed definable subset of $F^n$.
A definable Tietze extension theorem asserts that a definable continuous function $f:A \to F$ defined on a definable closed subset $A$ of $X$ has a definable continuous extension $F:X \to F$.

A group $(G,\cdot)$ is called a \textit{definable topological group} when the underlying set $G$ is a definable set and the inverse and multiplication in $G$ are definable and continuous.  
We say that $X$ is a \textit{$G$-set} when a definable topological group $G$ acts  definably and continuously on $X$.
Can we extend a $G$-invariant definable continuous function $f$ defined on a closed $G$-invariant subset of a $G$-set $X$ to $X$?
In this brief note, we give an affirmative answer when $G$ is definably compact; that is, $G$ is closed and bounded in the ambient space $F^m$.
	
\section{$G$-invariant definable Tietze extension theorem} \label{sec:Tietze}

We first prove the following key lemma:
\begin{lemma}\label{lem:G_invariance}
	Consider a definably complete expansion of an ordered group $\mathcal F=(F,<,+,0,\ldots)$.
	Let $G$ be a definably compact definable topological group and $X$ be a definable closed $G$-set.
	Let $A$ be a $G$-invariant subset of $X$ and $\varphi:X \to F$ be a definable continuous function.
	Then the definable function $\Phi: X \to F$ given by $\Phi(x)=\inf\{\varphi(gx)\;|\; g \in G\}$ is well-defined, $G$-invariant and continuous.
	In addition, for each $x \in X$, there exists $g_x \in G$ such that $\Phi(x)=\varphi(g_xx)$.
\end{lemma}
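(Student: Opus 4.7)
The plan is to verify in turn the four assertions in the conclusion — well-definedness of $\Phi$, attainment of the infimum, $G$-invariance, and continuity — where continuity will require the bulk of the work.

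For well-definedness and attainment, I would fix $x \in X$ and observe that the orbit map $G \to X$, $g \mapsto gx$, is definable and continuous, so its composition with $\varphi$ gives a definable continuous function $G \to F$. Because $G$ is definably compact, its image is a nonempty closed and bounded definable subset of $F$; definable completeness of $\mathcal F$ then guarantees that this image attains its minimum. This simultaneously shows that $\Phi(x)$ is well-defined and produces some $g_x \in G$ with $\Phi(x) = \varphi(g_x x)$. For $G$-invariance, I would use that for any $h \in G$ the map $g \mapsto gh$ is a bijection of $G$, so the families $\{gh : g \in G\}$ and $G$ coincide, giving $\Phi(hx) = \inf_{g \in G} \varphi((gh)x) = \inf_{g' \in G} \varphi(g' x) = \Phi(x)$.

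For continuity, fix $x_0 \in X$ and $\varepsilon > 0$. The strategy is to reduce continuity of $\Phi$ to a uniform estimate in the $g$-variable: there should be a neighborhood $U$ of $x_0$ in $X$ such that $|\varphi(gx) - \varphi(gx_0)| < \varepsilon$ for every $g \in G$ and every $x \in U$. Granted this, the elementary inequality $|\inf_g a_g - \inf_g b_g| \le \sup_g |a_g - b_g|$, applied to $a_g = \varphi(gx)$ and $b_g = \varphi(gx_0)$, gives $|\Phi(x) - \Phi(x_0)| \le \varepsilon$ on $U$, and since $\varepsilon$ is arbitrary continuity at $x_0$ follows. To establish the uniform estimate, I would introduce the definable closed set $C = \{(g,x) \in G \times X : |\varphi(gx) - \varphi(gx_0)| \ge \varepsilon\}$, whose $x_0$-fiber is empty, and then appeal to the tube lemma for definable compactness: since $G$ is definably compact, the projection $G \times X \to X$ restricted to $C$ has closed image in $X$, and $U$ can be taken to be the complement of this image.

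The main obstacle is justifying this tube lemma in a definably complete expansion of an ordered group. I expect it to be invoked as a standard property of definably compact sets, but if a direct argument is preferred the route is a supremum argument using definable completeness: for each $g \in G$ let $r(g)$ be the supremum of radii $r > 0$ such that the $\varepsilon$-estimate holds on a box of radius $r$ around $g$ paired with a box of some corresponding radius around $x_0$; continuity of the action and of $\varphi$ gives $r(g) > 0$ pointwise, and closed-and-boundedness of $G$ together with definable completeness forces $\inf_{g \in G} r(g) > 0$, which produces the required common neighborhood $U$ of $x_0$.
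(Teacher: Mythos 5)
Your proposal is correct in substance and follows the same overall skeleton as the paper (well-definedness and attainment via definable compactness of the orbit image, invariance by translation, continuity via the uniform estimate $|\varphi(gx)-\varphi(gx_0)|<\varepsilon$ for all $g\in G$ and $x$ near $x_0$, followed by the standard comparison of infima). Where you genuinely diverge is in how you obtain that uniform estimate. The paper restricts to the definably compact set $G\times X_R$ with $X_R=\{x\in X : |x-x_0|\le R\}$ (this is where closedness of $X$ enters) and quotes uniform continuity of $(g,x)\mapsto\varphi(gx)$ on definably compact sets from the cited reference; you instead form the closed set $C=\{(g,x): |\varphi(gx)-\varphi(gx_0)|\ge\varepsilon\}$ with empty $x_0$-fiber and invoke the tube lemma, i.e.\ that projection along the definably compact factor $G$ is a closed map. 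Your route is sound and arguably more conceptual, but the tube lemma is the nontrivial ingredient and is not quotable off the shelf in this generality the way uniform continuity is; you would need to derive it, e.g.\ by showing that for $y_0\in\operatorname{cl}(\pi(C))$ the sets $D_r=\{g\in G: \exists y,\ |y-y_0|\le r,\ (g,y)\in C\}$ are nonempty, closed (as images of the definably compact sets $C\cap(G\times \overline{B}(y_0,r))$ --- here you, too, need $X$ closed so that these sets are closed in the ambient space) and decreasing, and then apply the nested intersection property of definably compact sets available in definably complete structures. So the two proofs trade one compactness principle for another, and the total work is comparable.

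One caution about your fallback argument: bounding $\inf_{g\in G} r(g)$ away from zero does not by itself finish the proof, because the ``corresponding radius around $x_0$'' in your definition of $r(g)$ is existentially quantified and still depends on $g$; a positive lower bound on the $g$-radii gives you no common neighborhood of $x_0$ without a further uniformization step. If you want a direct argument, apply the supremum/Lipschitz device to the radius around $x_0$ rather than the radius around $g$, or simply carry out the nested-family proof of the tube lemma sketched above. As stated, only your primary route closes the argument.
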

\begin{proof}
	We first show that the map $\Phi$ is well-defined and the `in addition' part of the lemma.
	We fix $x \in X$.
	Since $G$ is definably compact and $\varphi$ is continuous, the definable set $\{\varphi(gx)\;|\; g \in G\}$ is definably compact by \cite[Proposition 1.10]{M}.
	Therefore, the infimum of the set $\{\varphi(gx)\;|\; g \in G\}$ is uniquely determined and the infimum is contained in this set.
	It implies that there exists $g_x$ such that $\Phi(x)=\varphi(g_xx)$.
	We have proven the well-definedness of $\Phi$ and the existence of $g_x$.
	
	The $G$-invariance of $\Phi$ is obvious by the definition.
	The remaining task is to prove that $\Phi$ is continuous.
	Fix an arbitrary point $x_0$ in $X$ and we show that $\Phi$ is continuous at $x_0$.
	We fix an arbitrary positive element $\varepsilon>0$.
	Let $F^m$ and $F^n$ be the ambient spaces of $G$ and $X$, respectively.
	For any $x=(x_1,\ldots, x_n),\ y=(y_1,\ldots, y_n) \in F^n$, we set $|x-y|=\max_{1 \leq i \leq n}|x_i-y_i|$.
	We also define $|g-h|$ similarly for elements $g$ and $h$ in $F^m$. 
	Take a positive element $R$ in $F$, we consider the set $X_R:=\{x \in X\;|\; |x-x_0| \leq R\}$.
	It is definably compact because $X$ is closed.
	The definable set $G \times X_R$ is also definably compact.
	Consider the definable continuous function $\zeta:G \times X_R \to F$ given by $\zeta(g,x)=\varphi(gx)$.
	It is uniformly continuous by \cite[Corollary 2.8]{Fuji_compact} because its domain of definition is definably compact.
	There exists $\delta>0$ such that, for each $(g,x), (h,y) \in G \times X_R$, the inequality $|\zeta(g,x)-\zeta(h,y)|<\varepsilon$ holds whenever $|g-h|<\delta$ and $|x-y|<\delta$.
	We may assume that $\delta<R$ by taking a smaller $\delta$ if necessary.
	We easily obtain $$|\varphi(gx_0)-\varphi(gx_1)|<\varepsilon$$ for each $g \in G$ and $x_1 \in X_R$ with $|x_1-x_0|<\delta$.
	
	We fix an arbitrary element $x_1 \in X$ such that $|x_1-x_0|<\delta$.
	We want to show that $|\Phi(x_1)-\Phi(x_0)|<\varepsilon$.
	This inequality means that $\Phi$ is continuous at $x_0$.
	Note that $x_1 \in X_R$.
	We can take $g_i \in G$ such that $\Phi(x_i)=\varphi(g_ix_i)$ for $i=1,2$.
	We have $$\Phi(x_1) =\inf\{\varphi(gx_1)\;|\; g \in G\} \leq \varphi(g_0x_1)<\varphi(g_0x_0)+\varepsilon=\Phi(x_0)+\varepsilon.$$
	We get $\Phi(x_0)<\Phi(x_1)+\varepsilon$ by symmetry.
	It means that $|\Phi(x_1)-\Phi(x_0)|<\varepsilon$.
	We have proven that $\Phi$ is continuous.
\end{proof}

The following are main theorems of this paper.

\begin{theorem}[$G$-invariant definable Tietze extension theorem]\label{thm:g_inv_tietze}
	Consider a definably complete expansion of an ordered field $\mathcal F=(F,<,+,\cdot,0,1,\ldots)$.
	Let $G$ be a definably compact definable topological group and $X$ be a definable $G$-set contained in $F^n$.
	Let $A$ be a $G$-invariant closed subset of $X$ and $\varphi:A \to F$ be a $G$-invariant definable continuous function.
	Assume that $X$ is locally closed in $F^n$.
	Then there exists a $G$-invariant definable continuous extension $\Phi:X \to F$ of $\varphi$.
\end{theorem}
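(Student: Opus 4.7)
The plan is to first apply the (non-equivariant) definable Tietze extension theorem recalled in Section~\ref{sec:intro} to obtain some definable continuous extension $\tilde\varphi\colon X \to F$ of $\varphi$, and then to symmetrize it via the infimum-over-orbits construction of Lemma~\ref{lem:G_invariance} in order to enforce $G$-invariance while keeping continuity and the values on $A$.

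First, since $X$ is locally closed in $F^n$ and $A$ is a definable closed subset of $X$, the definable Tietze extension theorem recalled in the introduction provides a definable continuous $\tilde\varphi\colon X \to F$ with $\tilde\varphi|_A = \varphi$. No invariance is demanded of $\tilde\varphi$ at this stage.

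Next, I would set
\[
\Phi(x) := \inf\{\tilde\varphi(gx)\;|\; g \in G\}
\]
and invoke Lemma~\ref{lem:G_invariance} with $\tilde\varphi$ in place of $\varphi$. This yields that $\Phi$ is well-defined, $G$-invariant, definable and continuous on $X$, and that the infimum is attained at some $g_x \in G$ for each $x$. The only subtle point is that Lemma~\ref{lem:G_invariance} is stated under the hypothesis that $X$ is closed, whereas here $X$ is only locally closed. However, closedness enters the proof of the lemma solely to ensure that the sliced piece $X_R=\{x\in X \;|\; |x-x_0|\le R\}$ is definably compact. Under local closedness, for any fixed $x_0 \in X$ one may choose an open neighborhood $U$ of $x_0$ in $F^n$ with $X\cap U$ closed in $U$, and then shrink $R$ so that the closed ball of radius $R$ about $x_0$ lies in $U$; the resulting $X_R$ is closed in that definably compact ball, hence definably compact, and the remainder of the lemma's proof goes through verbatim.

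Finally, I would check that $\Phi|_A=\varphi$. For $x \in A$, the $G$-invariance of $A$ gives $gx \in A$ for every $g\in G$, and the $G$-invariance of $\varphi$ then forces $\tilde\varphi(gx)=\varphi(gx)=\varphi(x)$; consequently the infimum defining $\Phi(x)$ equals $\varphi(x)$. Together with the conclusions of Lemma~\ref{lem:G_invariance}, this exhibits $\Phi$ as the desired $G$-invariant definable continuous extension. The only real obstacle is the closed-versus-locally-closed mismatch in the hypotheses; everything else is formal once one observes that the argument of Lemma~\ref{lem:G_invariance} localizes around each fixed point.
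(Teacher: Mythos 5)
Your proposal is correct, and the overall skeleton (extend non-equivariantly, then symmetrize by $\Phi(x)=\inf_{g\in G}\tilde\varphi(gx)$ and quote Lemma~\ref{lem:G_invariance}, then use $G$-invariance of $A$ and $\varphi$ to see $\Phi|_A=\varphi$) is exactly the paper's. Where you genuinely diverge is in handling the closed-versus-locally-closed mismatch. The paper resolves it by a global reduction: since $X$ is locally closed, its frontier $\partial X$ is closed, and the map $\iota(x)=(x,1/d(x))$ with $d$ the distance to $\partial X$ embeds $X$ homeomorphically onto a \emph{closed} definable subset of $F^{n+1}$; after this replacement, Lemma~\ref{lem:G_invariance} and the cited Tietze theorem \cite{A} apply verbatim to a closed set. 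You instead reopen the proof of Lemma~\ref{lem:G_invariance} and observe that closedness of $X$ is used only to make $X_R$ definably compact, which local closedness also delivers after shrinking $R$ so that the closed ball around $x_0$ misses $\partial X$ (possible since $\partial X$ is closed and disjoint from $X$). Your localization is sound --- well-definedness and attainment of the infimum never needed $X$ closed, only definable compactness of $G$ --- and it has the merit of not requiring the field inverse, but it means Lemma~\ref{lem:G_invariance} cannot be cited as a black box and you are in effect proving a local variant of it. Two small caveats: you should either note that the group action must carry $\iota$-free coordinates consistently (a non-issue in your version) or, more importantly, that applying the non-equivariant Tietze theorem directly to a \emph{locally closed} $X$ presupposes that version of the theorem; the paper only invokes \cite{A} for $A$ closed in $F^n$, which is why it performs the closedness reduction \emph{before} extending. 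If the locally closed version is taken as known (as the introduction suggests), your argument is complete.
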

\begin{proof}
	We may assume that $X$ is closed in $F^n$.
	In fact, the frontier $\partial X$ of $X$ in $F^n$ is closed because $X$ is locally closed.
	Let $d:F^n \to F$ be the definable continuous map such thar $d(x)$ is the distance of $x$ to $\partial X$.
	It is obvious that the zero set of $d$ is $\partial X$.
	The map $\iota:F^n \setminus \partial X \to F^{n+1}$ given by $\iota(x)=(x,1/d(x))$ is a definable homeomorphism onto its image, and the image of $X$ under $\iota$ is closed in $F^{n+1}$. 
	Therefore, we may assume that $X$ is closed by considering $\iota(X)$ in place of $X$.
	
	Since $A$ is closed in $X$, it is also closed in $F^n$.
	Applying the original definable Tietze extension theorem \cite[Lemma 6.6]{A} to $A$, there exists a definable continuous extension $\Psi:X \to F$ of $\varphi$ which is not necessarily $G$-invariant.
	We define $\Phi:X \to F$ by $\Phi(x)=:\inf\{\Psi(gx)\;|\; g \in G\}.$
	The $\Phi$ is continuous and $G$-invariant by Lemma \ref{lem:G_invariance}.
	It is obvious that the restriction of $\Phi$ to $A$ coincides with $\varphi$.
	We have shown that the map $\Phi$ is a desired extension.
\end{proof}

\begin{theorem}\label{thm:g_invariant_zeroset}
	Consider a definably complete expansion of an ordered group.
	Let $G$ be a definably compact definable topological group and $X$ be a definable closed $G$-set.
	Let $A$ be a $G$-invariant closed subset of $X$.
	There exists a $G$-invariant definable continuous function defined on $X$ whose zero set is $A$.
\end{theorem}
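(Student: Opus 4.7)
The plan is to exhibit a definable continuous function $\varphi:X\to F$ whose zero set equals $A$, and then symmetrize it via Lemma~\ref{lem:G_invariance} to produce the desired $G$-invariant function $\Phi$ with the same zero set. If $A$ is empty, any constant function with value in $F\setminus\{0\}$ (which exists because $\mathcal F$ is nontrivial) is $G$-invariant, definable, continuous and has empty zero set, so I may assume $A$ is nonempty.

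For $\varphi$ I would take the distance-to-$A$ function for the sup norm $|x-y|=\max_{1\le i\le n}|x_i-y_i|$ already used in Lemma~\ref{lem:G_invariance}:
\[
\varphi(x)=\inf\{\,|x-a|\ \mid\ a\in A\,\}.
\]
Only the additive and order structure of $\mathcal F$ is needed for $|\cdot|$, so this is legitimate in the ordered-group setting. Definable completeness guarantees that the infimum lies in $F$, since the set is nonempty and bounded below by~$0$; the map $\varphi$ is definable because $A$ is, and it is $1$-Lipschitz, hence continuous, by the triangle inequality for $|\cdot|$. Since $A$ is closed in $X$ and $X$ is closed in $F^n$, the set $A$ is closed in $F^n$, so the zero set of $\varphi$ is exactly $A$.

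Applying Lemma~\ref{lem:G_invariance} to this $\varphi$ then yields a definable continuous $G$-invariant function $\Phi(x)=\inf\{\varphi(gx)\mid g\in G\}$ together with, for each $x\in X$, an element $g_x\in G$ realizing $\Phi(x)=\varphi(g_xx)$. It remains to identify the zero set of $\Phi$. If $x\in A$, then $gx\in A$ for every $g\in G$ by $G$-invariance of $A$, so $\varphi(gx)=0$ for every $g$ and $\Phi(x)=0$. Conversely, if $\Phi(x)=0$, then $\varphi(g_xx)=0$, so $g_xx\in A$, and applying $g_x^{-1}$ together with the $G$-invariance of $A$ gives $x\in A$. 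There is no serious obstacle here, since Lemma~\ref{lem:G_invariance} absorbs all the analytic work; the only point to watch is that, as Theorem~\ref{thm:g_invariant_zeroset} is stated over an ordered group rather than a field, the distance used must be the sup-norm distance rather than a Euclidean one, which is precisely what Lemma~\ref{lem:G_invariance} employs.
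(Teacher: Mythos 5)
Your proposal is correct and follows essentially the same route as the paper: both take the sup-norm distance-to-$A$ function and symmetrize it over $G$ via Lemma~\ref{lem:G_invariance}, using the ``in addition'' part to show the zero set of the resulting $G$-invariant function is exactly $A$. Your explicit remarks on the empty case and the Lipschitz continuity of the distance function are minor additions the paper leaves implicit.
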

\begin{proof}
	Let $\mathcal F=(F,<,+,0,\ldots)$ be the given structure.
	Consider the definable map $d:X \to F$ given by $d(x)=\inf\{|x-a|\;|\; a \in A\}$, where $|x-a|$ is defined in the same manner as the proof of Lemma \ref{lem:G_invariance}.
	Since $A$ is closed in $X$, the zero set of $d$ is $A$.
	Let $D: X \to F$ be the definable map defined by $D(x)=\inf\{d(gx)\;|\;g \in G\}$.
	It is definable, continuous and $G$-invariant by Lemma \ref{lem:G_invariance}.
	The remaining task is to show that the zero set of $D$ is $A$.
	It is obvious that the zero set of $D$ contains $A$.
	The opposite inclusion is also easy.
	Take an arbitrary $x \in X \setminus A$.
	There exists $g_x \in G$ such that $D(x)=d(g_xx)$ by the `in addition' part of Lemma \ref{lem:G_invariance}.
	We have $D(x)=d(g_xx)>0$ because $g_xx \notin A$.
	We have proven that the zero set of $D$ is $A$.
\end{proof}

\end{document}